\newtheorem{thm}{Theorem}
\newtheorem{prop}[thm]{Proposition}
\newtheorem{cor}[thm]{Corollary}
\newtheorem{lem}[thm]{Lemma}
\newtheorem{conj}{Conjecture}
\newtheorem*{rmk}{Remark}
\theoremstyle{plain}
\newtheorem*{defn}{Definition}
\newcommand{\R}{\mathbb{R}}
\newcommand{\etc}{\textrm{etc.}}
\newcommand{\rad}{\mathrm{Rad}}
\title{On the Number of ABC Solutions with Restricted Radical Sizes}
\author{Daniel M. Kane}
\begin{document}
\maketitle

\begin{abstract}
We establish upper and lower bounds for the number of solutions to $A+B=C$ in relatively prime integers $A, B, C$ with
$\max(|A|, |B|, |C|) \leq N$  and $\rad(A) \leq N^a, \rad(B) \leq N^b, \rad(C) \leq N^c$,
valid when $0 < a, b, c \leq 1$. The lower bound is $\Omega(N^{a+b+c -1}(\log N)^{-2}),$
and the upper bound is of the form $O( N^{a+b+c-1+ \epsilon} + N^{1+\epsilon})$,  for any fixed $\epsilon>0$. In particular, these bounds match up to $N^\epsilon$ factors so long as $a+b+c\geq 2$, verifying a conjecture or Mazur in this parameter range.
\end{abstract}

\section{Introduction}

The abc-conjecture of Masser and Oesterl\'{e} is a famous unifying conjecture in number theory. For an integer $n$, let the radical of $n$ be given by $\rad(n) = \prod_{p|n} p$ be the product of its prime divisors.  The abc-Conjecture states roughly that the equation $A+B+C=0$ has no solutions in highly divisible, relatively prime integers $A, B, C$.In its strong form the conjecture can be  stated as follows (\cite[Conjecture 12.2.2]{BG}).

\begin{conj}\label{ABCConj} {\em (abc-Conjecture in the strong form)}
For any $\epsilon >0$ there are only finitely many solutions to the equation
$A+B+C=0$ in relatively prime integers $A,B,C$ so that
$\max(|A|,|B|,|C|) > \rad(ABC)^{1+\epsilon}.$
\end{conj}
\noindent For various equivalent versions of the abc-Conjecture and its important consequences
in number theory see  Bombieri and Gubler \cite[Chapter 12]{BG}.

Although the abc-Conjecture is not known to hold for any value of $\epsilon$, when the restrictions on $A, B,$ and $C$ are loosened the conjectural predictions on the number of solutions become more tractable. In particular, Mazur \cite{mazur} put forth
the following conjecture (stated in slightly different terminology), counting solutions measured
in terms of the sizes of their radicals.

\begin{conj}\label{mainConj}
Given constants $0<a,b,c\leq 1$, and $\epsilon>0$, let $S_{a,b,c}(N)$ be the number of triples of relatively prime integers, $A,B,C$ with $A+B+C=0$, $|A|,|B|,|C|\leq N$ and $\rad(A)\leq |A|^a, \rad(B)\leq |B|^b, \rad(C)\leq |C|^c$.

Then for any fixed $a,b,c$ with $a+b+c>1$ and $\epsilon >0$, then for all sufficiently large $N$
$$
N^{a+b+c-1-\epsilon} < S_{a,b,c}(N) < N^{a+b+c-1+\epsilon}.
$$
\end{conj}

We will henceforth refer to $S_{a,b,c}(N)$ as the number of solutions to the abc problem with parameters $(a,b,c)$ or with parameters $(a,b,c,N)$.

In \cite{mazur}, Mazur alludes to a proof of this Conjecture in the case when $5/6\leq a,b,c\leq 1$.  We extend this result to a wider range of values of $a,b$ and $c$.

In terms of lower bounds, we show that the lower bound in Conjecture \ref{mainConj} holds for all $a+b+c>1$. In particular, a lower bound was obtained in an unpublished note of Granville \cite{Gran}. We prove a slight strengthening of this result and in particular prove that:

\begin{thm}\label{lowerBoundThm}
For $0<a,b,c\leq 1,$ with $a+b+c>1$, the number of solutions to the abc problem with parameters $(a,b,c,N)$ is $\Omega(N^{a+b+c-1}\log(N)^{-2})$.
\end{thm}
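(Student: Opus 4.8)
The plan is to construct, for every admissible $(a,b,c)$, an explicit family of $\gg N^{a+b+c-1}$ solutions. Since $A+B+C=0$ is symmetric I may relabel so that $a\le b\le c$; the hypothesis is then $a+c>1$, so $c>\tfrac12$, and (as $b\ge a$) also $b+c>1$. Two preliminaries. First, $\#\{n\le M:r(n)\le n^a\}\gg_a M^a$: writing $k=\lfloor 1/a\rfloor+1$, every $n=m^k d$ with $d$ squarefree and $d\le m^{(ka-1)/(1-a)}$ has $r(n)\le md\le n^a$, the map $n\mapsto(m,d)$ is a bijection once $d$ is taken to be the squarefree part of $n$, and the count of such $n\le M$ works out to $\gg M^a$; moreover one may impose $n\equiv 1\pmod Q$ for any fixed $Q$ at the cost of only an implied constant. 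Second, if $c=1$ the slot $C$ is unconstrained, so it suffices to run $A$ over the $\gg N^a$ integers $\le N/2$ with $r(A)\le A^a$ and $B$ over the $\gg N^b$ such integers with $r(B)\le B^b$ (all taken $\equiv 1$ modulo a small fixed primorial, which forces $\gcd(A,B)=1$ for a positive proportion of pairs) and set $C=-(A+B)$; this already gives $\gg N^{a+b}=N^{a+b+c-1}$ solutions.

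For $c<1$ there is no free slot, and I would look for solutions of the shape $A=\pm\,\delta_1 x^2$, $B=\pm\,\delta_2 y^2$, $C=\pm\,\delta_3 z^2$ with $(x,y,z)$ a nonzero integer point on the ternary conic $\delta_1 x^2+\delta_2 y^2+\delta_3 z^2=0$. Here $r(A)\le r(\delta_1)\,|x|$ while $|A|=|\delta_1|\,x^2$, so a slot attains radical exponent close to $1$ when its $\delta_i$ is squarefree and large compared with the coordinate, attains exponent below $\tfrac12$ when $\delta_i$ is a perfect $j$-th power with $j>1/a$ (so $r(\delta_i)\le\delta_i^{1/j}$), and attains intermediate exponents by carrying a free squarefree multiplier of controlled size; when an exponent is very small one also lets the coordinate itself be a perfect power. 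One then sums over admissible triples $(\delta_1,\delta_2,\delta_3)$ and, for each, over the rational points of bounded height on the conic, using Legendre's theorem for solubility, Holzer's bound $|x|\le\sqrt{|\delta_2\delta_3|}$ (and cyclically) for the least solution, and the parametrization of the conic by $\mathbb{P}^1$, which yields $\gg (N/|\delta_1\delta_2\delta_3|)^{1/2}$ further points of height $\le N$. Coprimality of $A,B,C$ is secured by taking the $\delta_i$ pairwise coprime, restricting to primitive points, imposing $\gcd(e_i,xyz)=1$ for the power parts $\delta_i=e_i^{j_i}$, and discarding an $O(N^{a+b+c-1-\eta})$ set of coincidences.

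Assembling this, the count becomes a linear optimization in the log-sizes of the boxes from which the $\delta_i$ are drawn, subject to $|A|,|B|,|C|\le N$ and to the three inequalities $\log r(\cdot)\le(\text{exponent})\cdot\log|\cdot|$; the claim is that the optimum equals $N^{a+b+c-1}$ and that the feasible region is nonempty precisely when $a+c>1$. I expect the main obstacle to be exactly this bookkeeping, for two intertwined reasons. The construction splits into subcases by how many of $a,b,c$ lie below $\tfrac12$ — none (plain Pythagorean-type conics, all $\delta_i$ squarefree), one, or two (then two of the $\delta_i$ must be perfect powers, tuned against one another) — and in each case all the boxes have to be chosen simultaneously so that no member of the family exceeds $N$; that this is possible iff $a+c>1$ is the substance of the theorem. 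More delicately, one cannot simply range over all $(\delta_i)$ in a box: that would apparently produce far more than $N^{a+b+c-1}$ solutions, contradicting Conjecture \ref{mainConj}. When a $\delta_i$ is a high power the conic rewrites with a divisibility constraint on a coordinate, so it is soluble only for a sparse set of the remaining data, and showing that the number of \emph{soluble} configurations with least solution of controlled size is $\asymp$ the optimum is the crux. The remaining ingredients — a density form of Legendre's criterion over the relevant boxes, uniformity in Holzer's bound, and the coprimality sieve — are classical, so the argument should close once the case analysis and the counting of soluble conics are set up correctly.
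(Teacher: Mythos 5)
Your proposal takes a genuinely different route from the paper, and the route you sketch has gaps that you yourself flag as unresolved, so this does not constitute a proof.

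The paper's argument is much shorter and more elementary. After relabeling so $a\le b\le c$ (hence $a+c>1$), it chooses integers $x,y,z$ minimal with $2^{x-1}>N^{1-a}$, $3^{y-1}>N^{1-b}$, $5^{z-1}>N^{1-c}$, and considers only triples with $2^x\mid A$, $3^y\mid B$, $5^z\mid C$. The point is that for any such $A$ with $|A|\le N$ one automatically has $r(A)\le 2\,|A|/2^x<|A|/N^{1-a}\le |A|^a$, and likewise for $B,C$; there is no need to analyse the multiplicative structure of $A,B,C$ beyond this forced divisibility. The triples then form a rank-two lattice $L$ inside the plane $A+B+C=0$ with covolume $2^x3^y5^z=\Theta(N^{3-a-b-c})$, the box $|A|,|B|,|C|\le N$ is a polygon of area $\Theta(N^2)$, and Lemma \ref{latticeBoundsLem} plus a M\"obius sieve over $L_n=\{(A,B,C)\in L:\ n\mid A,B,C\}$ gives a main term $\Theta(N^{a+b+c-1})$. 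The hypothesis $\min+\max>1$ is used only once, to control the error: the explicit vectors $(0,3^y5^z,-3^y5^z)$ and $(2^x5^z,0,-2^x5^z)$ show the second successive minimum $M$ of $L$ is $O(N^{1-\delta})$, which makes the $O(\text{Perimeter}/m)$ error subdominant. This is the idea your proposal misses: you can \emph{force} the radical bound by divisibility by a fixed high prime power, one distinct small prime per slot, rather than engineering near-square or near-power structure.

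As for your own outline, the $c=1$ reduction is fine in spirit (the $C$ slot is unconstrained), though the coprimality claim ``$\equiv 1 \pmod Q$ forces $\gcd(A,B)=1$ for a positive proportion'' needs a sieve over all primes, not just those dividing $Q$, and the ``bijection $n\mapsto(m,d)$'' in your preliminary is not literally a bijection without further normalisation. The real gap is the case $c<1$: you propose to range over conics $\delta_1x^2+\delta_2y^2+\delta_3z^2=0$ and you correctly observe that most choices of $(\delta_1,\delta_2,\delta_3)$ give no rational points at all, so the count must be restricted to soluble configurations — but you then state that showing the soluble count with controlled least solution is $\asymp$ the target ``is the crux'' and leave it unproved. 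That crux \emph{is} the theorem in your approach; Legendre's criterion in density form over three boxes, uniformity of Holzer's bound across the family, and the multi-case bookkeeping by how many of $a,b,c$ lie below $1/2$ are each nontrivial, and none is carried out. The paper avoids the entire issue, and in particular never needs solubility of conics for the lower bound — conics are used only for the \emph{upper} bound in Section \ref{conicsSec}.
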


The upper bounds prove to be somewhat more difficult. There, we manage to show the bounds proposed in Conjecture \ref{mainConj} when $a+b+c\geq 2$. Note that below and throughout the rest of the paper, when using $N^\epsilon$ in asymptotic notation, it will be taken to mean that the bound holds for any $\epsilon>0$ but that the implied constants may depend on $\epsilon$.

\begin{thm}\label{upperBoundThm}
For $0<a,b,c\leq 1,$ the number of solutions to the abc problem with parameters $(a,b,c,N)$ is $O(N^{a+b+c-1+\epsilon}+N^{1+\epsilon})$.
\end{thm}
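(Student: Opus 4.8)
The plan is to isolate the one genuinely hard estimate — a two–variable version of the problem — feed it into a trivial outer decomposition, and connect the two via a bound on the number of integers with small radical.

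\smallskip

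\noindent\emph{The connecting tool.} For $1\le D\le X$ one has $\#\{n\le X:r(n)\le D\}=O(D\,X^{\epsilon})$, and hence $\#\{n\le X:r(n)\le n^{\alpha}\}=O(X^{\alpha+\epsilon})$ for each fixed $0<\alpha\le 1$. This follows from the unique factorisation $n=d\cdot k$ with $d=r(n)$ squarefree and $k=n/d$ supported on the primes dividing $d$: the number of such $k$ with $dk\le X$ is at most the number of lattice points $(e_{p})_{p\mid d}$ with $\sum_{p\mid d}e_{p}\log p\le\log(X/d)$, which a scaled–simplex volume estimate bounds by $(\log X)^{\omega(d)}/\bigl(\omega(d)!\prod_{p\mid d}\log p\bigr)=X^{o(1)}$, uniformly over squarefree $d\le X$; summing over the $\le D$ admissible $d$ gives the first bound, and $D=X^{\alpha}$ the second.

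\smallskip

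\noindent\emph{Reduction.} I would reduce Theorem~\ref{upperBoundThm} to the statement that, for every nonzero $t\in\Z$ with $|t|\le 2N$ and all $0<\alpha,\beta\le 1$,
$$N_{\alpha,\beta}(t):=\#\{(A,B)\in\Z^{2}:A+B=t,\ |A|,|B|\le N,\ r(A)\le|A|^{\alpha},\ r(B)\le|B|^{\beta}\}=O\bigl(N^{\alpha+\beta-1+\epsilon}+N^{\epsilon}\bigr).$$
Granting this, write an ABC solution as $A+B=-C$; there are $O(N^{c+\epsilon})$ admissible $C$ by the tool above, for each the number of pairs is $N_{a,b}(-C)=O(N^{a+b-1+\epsilon}+N^{\epsilon})$, and the product is $O(N^{a+b+c-1+\epsilon}+N^{c+\epsilon})=O(N^{a+b+c-1+\epsilon}+N^{1+\epsilon})$. (Equivalently one may fix whichever variable is convenient; and when the two smallest of $a,b,c$ sum to at most $1$ the trivial inner bound $O(N^{\min(a,b)+\epsilon})$ already suffices, so the estimate is only needed for $\alpha+\beta>1$, where the $N^{\epsilon}$ term is absorbed.)

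\smallskip

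\noindent\emph{The crux.} The estimate for $N_{\alpha,\beta}(t)$ is where the work lies. I would parametrise $A=d_{A}u$, $B=d_{B}v$ with $d_{A}=r(A)$, $d_{B}=r(B)$ squarefree, $u$ supported on the primes of $d_{A}$ and $v$ on those of $d_{B}$, and sum over admissible pairs $(d_{A},d_{B})$, necessarily with $\gcd(d_{A},d_{B})\mid t$. For a fixed pair there are two complementary bounds on the number of valid $A$: a congruence bound, since $d_{A}\mid A$ and $d_{B}\mid t-A$ pin $A$ to a single residue class modulo $\operatorname{lcm}(d_{A},d_{B})$, giving at most $2N/\operatorname{lcm}(d_{A},d_{B})+1$ values; and a smooth–support bound, since the cofactor $u$ (or $v$) must be a number supported on the primes of $d_{A}$ (resp.\ $d_{B}$), of which there are only $N^{\epsilon}$ by the tool above. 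One then splits the sum over $(d_{A},d_{B})$ by the size of $\operatorname{lcm}(d_{A},d_{B})$: large values are handled by the congruence bound, while small values force $d_{A},d_{B}$ small, so their cofactors — supported numbers as large as $N/d_{A}$ — are genuinely scarce. The main obstacle is arranging this dichotomy so the outcome is $N^{\alpha+\beta-1+\epsilon}$ and not the trivial $N^{\min(\alpha,\beta)+\epsilon}$: one must track the dependence of the smooth–support counts on $\omega(d_{A}),\omega(d_{B})$ (the crude bound $(\log N)^{\omega(d)}$ is not $N^{o(1)}$ once $\omega(d)$ grows, so the sharper simplex estimate is needed, weighed against the rarity of radicals with many prime factors), and one must show that the pairs $(d_{A},d_{B})$ actually admitting a solution with $|A|,|B|\le N$ are sparse enough that the accumulated ``$+1$'' terms stay below the target. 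This is in effect a quantitative version of the heuristic independence of the two families of small–radical numbers under translation; everything else is bookkeeping, and the residual $N^{1+\epsilon}$ in the theorem is precisely the $N^{\epsilon}$ floor of $N_{\alpha,\beta}$ carried through the outer sum.
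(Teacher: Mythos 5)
There is a genuine gap, and it is exactly at the point you label the ``crux.'' Your outer reduction is clean and your connecting tool is sound (it is essentially the paper's Proposition~\ref{radProp}), but you have reduced the theorem to a statement you do not prove and which the sketched method cannot obviously deliver: the uniform-in-$t$ two-variable bound $N_{\alpha,\beta}(t)=O(N^{\alpha+\beta-1+\epsilon}+N^{\epsilon})$. This is not a piece of the theorem; it is a \emph{strictly stronger} ``fibered'' statement. The theorem bounds the total over all $C$, whereas you are asking for a bound for each fixed shift $t=-C$ separately, with no averaging. Such per-shift control over small-radical numbers is of the same flavor as hard problems about friable or powerful integers in short or shifted sets (e.g.\ pairs $n$, $n+t$ both powerful), and it does not follow from the congruence/smooth-support dichotomy you outline. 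You acknowledge the obstruction yourself: summing $O(N^{\epsilon})$ over pairs $(d_A,d_B)$ with $d_A\le N^{\alpha}$, $d_B\le N^{\beta}$ gives $O(N^{\alpha+\beta+\epsilon})$, and the only way to cut the number of contributing pairs below that is to bound the number of pairs $(r(A),r(t-A))$ arising from a solution --- but that number is exactly (at most) the quantity $N_{\alpha,\beta}(t)$ you are trying to bound, so the argument is circular. Calling the rest ``bookkeeping'' hides that this is the entire difficulty.

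The paper does something genuinely different and crucially keeps all three variables in play symmetrically. It proves two complementary bounds in dyadic ranges: a lattice bound (Proposition~\ref{latticeUpperBoundProp}), which after fixing the non-squarefree parts $e(A),e(B),e(C)$ gives $O(N^{a+b+c-1+\epsilon}+v(A)v(B)v(C)N^{\epsilon})$, and a conic bound (Proposition~\ref{conicUpperBoundProp}), which writes $A=T(A)S(A)^2$ \etc\ and applies Heath-Brown's theorem on primitive points of ternary quadratic forms to get $O\big((T(A)T(B)T(C)+\sqrt{S(A)S(B)S(C)T(A)T(B)T(C)})N^{\epsilon}\big)$. Because $T(n)S(n)^2\le N$, the conic bound is decreasing in $S(A)S(B)S(C)$ while the lattice bound is increasing, and they cross at $S(A)S(B)S(C)\sim N$, each giving $O(N^{a+b+c-1+\epsilon}+N^{1+\epsilon})$ on its side of the crossover. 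Nothing in your sketch plays the role of the Heath-Brown input; indeed, the natural two-variable analogue (a binary form $T(A)S(A)^2\pm T(B)S(B)^2=t$, summed over $T$'s) crosses the two-variable lattice bound only at $S(A)S(B)\sim N^{2/3}$, giving $O(N^{2/3+\epsilon})$ per shift --- far short of the $O(N^{\epsilon})$ floor your reduction demands when $\alpha+\beta\le 1$. So the reduction, besides being unproven, lands on a target that is out of reach of these tools.

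In short: the reduction step is correct but unhelpful, the two-variable estimate is the real theorem in disguise (and harder), and the missing ingredient in your sketch is precisely the paper's use of Heath-Brown's conic bound in the large-square-divisor regime, for which your congruence/smooth-support dichotomy has no substitute.
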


We use two main techniques to prove these Theorems.  First we use lattice methods.  The idea is to fix the non-squarefree parts of $A,B,C$ and to then count the number of solutions.  For example if $A,B,C$ have non-squarefree parts $\alpha,\beta,\gamma$, we need to count solutions to an equation of the form $\alpha X+\beta Y+\gamma Z=0$.

Secondly, we use a result of Heath-Brown on the number of integer points on a conic.  The idea here is to note that if $A,B,C$ are highly divisible, they are likely divisible by large squares.  Writing $A=\alpha X^2, B=\beta Y^2, C=\gamma Z^2$, then for fixed values of $\alpha,\beta,\gamma$ the solutions to the abc problem correspond to integer points of small size on a particular conic.

In Section 2, we cover the lattice methods.  In particular, in Section \ref{latticeLowerBoundsSec}, we use these methods to prove Theorem \ref{lowerBoundThm}. The proof of our upper bound will involve breaking our argument into cases based upon the approximate sizes of various parameters of $A$, $B$, and $C$.  In Section \ref{DaidicIntervalsSec}, we introduce some ideas and notation that we will use when making these arguments.  In Section \ref{LatticeUpperBoundsSec}, we use lattice methods to prove an upper bound on the number of solutions.  These techniques will work best when $A$, $B$, and $C$ have relatively few repeated factors.  In Section \ref{conicsSec}, we prove another upper bound, this time using our methods involving conics.  These results will turn out to be most effective when $A$,$B$ and $C$ have many repeated factors. Finally, we combine these results with our upper bound from the previous Section to prove Theorem \ref{upperBoundThm}.

\section{Lattice Methods}

\subsection{Lattice Lower Bounds}\label{latticeLowerBoundsSec}

In this section we will prove Theorem \ref{lowerBoundThm} using lattice methods. In particular, we will show that many solutions can be found by looking for points of small size in a well-chosen lattice. For a basic references on lattices see the first couple chapters of \cite{lattice}. In order for this to work, we will need bounds on the number of lattice points in certain regions. For this, we will need the following Lemma:

\begin{lem}\label{latticeBoundsLem}
Let $L$ be a lattice in a two dimensional vector space $V$ and $P$ a convex polygon in $V$.  Let $m$ be the minimum separation between points of $L$.  Then
$$
|L\cap P| = \frac{\textrm{Volume}(P)}{\textrm{CoVolume}(L)} + O\left(\frac{\textrm{Perimeter}(P)}{m} + 1 \right).
$$
\end{lem}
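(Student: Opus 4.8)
The plan is to reduce the count to a one-dimensional problem by slicing $P$ with the family of lattice lines parallel to a shortest vector of $L$, and then to compare the resulting Riemann sum with an integral, exploiting the concavity of the slice-length function.

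First I would fix a Minkowski-reduced basis $b_1,b_2$ of $L$, so that $|b_1|=m$ (the first minimum of $L$), $|b_2|\geq m$, and the angle $\theta$ between $b_1$ and $b_2$ lies in $[\pi/3,2\pi/3]$, whence $\sin\theta\geq \sqrt{3}/2$. Consider the lines $\ell_b=\R b_1+b\,b_2$ for $b\in\Z$. Consecutive lines lie at perpendicular distance $\delta:=|b_2|\sin\theta$, so that $\textrm{CoVolume}(L)=|b_1||b_2|\sin\theta=m\delta$, while the points of $L$ lying on a fixed $\ell_b$ form an arithmetic progression with common difference $|b_1|=m$.

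Next, for each $b$ the slice $\ell_b\cap P$ is, by convexity of $P$, a (possibly empty or degenerate) segment of some length $L_b$, and hence contains $L_b/m+O(1)$ points of $L$. Partitioning $L\cap P$ according to the line containing each point and summing over $b$ gives
\[
|L\cap P|=\frac{1}{m}\sum_b L_b+O\bigl(\#\{b:\ell_b\cap P\neq\varnothing\}\bigr).
\]
The number of nonempty slices is at most $1+\textrm{diam}(P)/\delta=O(\textrm{diam}(P)/m)+O(1)$, since $\delta\geq \tfrac{\sqrt3}{2}m$. For the main term, let $L(s)$ be the length of the slice of $P$ by the line in the direction of $b_1$ at perpendicular coordinate $s$; then $L$ is nonnegative, supported on an interval, bounded above by $\textrm{diam}(P)$, concave (as $P$ is convex), and $\int L(s)\,ds=\textrm{Volume}(P)$ by Fubini. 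Since $\delta\sum_b L_b$ is a Riemann sum for this integral with mesh $\delta$, and a nonnegative concave function has total variation at most $2\max L$, one gets $|\delta\sum_b L_b-\textrm{Volume}(P)|=O(\delta\,\textrm{diam}(P))$; dividing by $m\delta=\textrm{CoVolume}(L)$ yields
\[
\frac{1}{m}\sum_b L_b=\frac{\textrm{Volume}(P)}{\textrm{CoVolume}(L)}+O\!\left(\frac{\textrm{diam}(P)}{m}\right).
\]
Combining the two displays, and using that $\textrm{diam}(P)\leq \tfrac12\textrm{Perimeter}(P)$ for a convex region (the two boundary points realizing the diameter cut $\partial P$ into two arcs, each at least as long as the chord joining them), gives the claimed estimate.

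The step I expect to demand the most care is the Riemann-sum comparison: one must show cleanly that slicing a convex body by equally spaced parallel lines approximates its volume with an error controlled purely by (spacing)$\,\times\,$(diameter), uniformly over how skew $L$ is — and this is exactly where the concavity of $L(s)$ enters (a naive collar-of-the-boundary estimate is too lossy for very skew lattices). The remaining ingredients are routine: the existence of a reduced basis with the stated angle bound, the fact that the lattice points on each $\ell_b$ are spaced exactly $m$ apart, and the elementary convex-geometry inequality relating diameter and perimeter.
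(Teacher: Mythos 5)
Your proof is correct, but it takes a genuinely different route from the paper's. The paper's argument is the ``normalize, then collar'' method: it applies a linear transformation (chosen via a reduced basis) so that $L$ becomes a square lattice, observes that this leaves $|L\cap P|$ and $\textrm{Vol}(P)/\textrm{CoVol}(L)$ unchanged while distorting $\textrm{Per}(P)/m$ by at most a bounded factor, and then sandwiches the union of fundamental domains around the points of $L\cap P$ between an outer $\sqrt{2}m$-neighborhood of $P$ and an inner $\sqrt{2}m$-erosion of $P$, comparing areas. You correctly identify that for a very skew lattice a naive collar estimate is too lossy --- but the paper's fix is exactly to remove the skew by a linear change of variables \emph{before} applying the collar argument, not to avoid the collar. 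Your approach instead confronts the skew head-on by slicing $P$ with the family of lattice lines parallel to the shortest vector $b_1$, counting points on each slice ($L_b/m + O(1)$), and comparing the resulting sum with $\int L(s)\,ds = \textrm{Vol}(P)$ as a Riemann sum, where the concavity of the slice-length function (Brunn--Minkowski in the trivial 2D case) bounds the total variation by $2\,\textrm{diam}(P)$ and hence the Riemann-sum error by $O(\delta\,\textrm{diam}(P))$. Both proofs rest on the same Minkowski-reduced basis, and both yield the stated error term; the paper's version is shorter and more purely geometric once you see the normalization trick, while yours is more explicitly computational and makes the dependence on the geometry of the slices transparent. Each is a valid proof of the lemma.
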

\begin{proof}
Begin with a reduced basis of $L$.  We apply a linear transformation to $V$ so that $L$ becomes a square lattice by fixing the short vector in the basis and sending the other vector to an orthogonal vector of length $m$.  This operation has no effect on $|L\cap P|$ or $\frac{\textrm{Volume}(P)}{\textrm{CoVolume}(L)}$ and can only increase $\frac{\textrm{Perimeter}(P)}{m}$ by at most a constant factor, therefore, it suffices to consider our problem in the case where $L$ is a square lattice.  We now note that if we draw a fundamental parallelepiped around each point of $|L\cap P|$, their union is sandwiched between the set of points within distance $\sqrt{2}m$ of $P$, and the set of points where the disc of radius $\sqrt{2}m$ around them is contained in $P$.  Thus, $m^2$ times the number of such points is bounded between the areas of these two regions, which gives our result.
\end{proof}
\begin{rmk}
Note that we will need to apply Lemma \ref{latticeBoundsLem} in the case when $L$ is a $2$-dimensional sublattice of $\R^3$. In this case, we will set $V$ to be the real span of $L$ and define the covolume of $L$ and volume of $P$ using the measure on $V$ coming from the induced metric.
\end{rmk}

The basic idea of our proof of Theorem \ref{lowerBoundThm} will be as follows.  We begin by picking relatively prime integers $\alpha,\beta,\gamma$ so that $\alpha/\rad(\alpha) > N^{1-a}$, $\beta/\rad(\beta) > N^{1-\beta}$ and $\gamma/\rad(\gamma) > N^{1-c}$ (for $a,b,c$ as given by our abc problem).  We will then look for solutions to the abc problem in which $\alpha|A, \beta|B$ and $\gamma|C$.  We note that since $\alpha,\beta,\gamma$ each have such small radicals, that any such $A,B,C$ each less than $N$ will automatically satisfy $\rad(A)<|A|^a, \rad(B)<|B|^b$ and $\rad(C)<|C|^c$.  We are left with the problem of finding such $A,B,C$ that are relatively prime and sum to 0.  The set of such $A,B,C$ with sum 0 form a 2-dimensional lattice, and the set with $|A|,|B|,|C|\leq N$ a convex polygon. Hence the number of such solutions may be counted using Lemma \ref{latticeBoundsLem}.  We may additionally find the number of such triples with $A,B,C$ relatively prime by using sieve methods.  This will suffice to provide an appropriate lower bound, unless the shortest vector of the lattice in question is very small.  In the following Lemma, we show that we can pick $\alpha,\beta,\gamma$ to avoid such problems.

\begin{lem}\label{evenLatticeLem}
Let $0<a\leq b \leq c \leq 1$ so that $a+b+c>1$.  Let $\delta = a+b+c-1$.  Let $N$ be a positive integer.  Then there exists a prime $5<q=O(\log N)$ and integers $\alpha = 2^x q^w$, $\beta = 3^y$, $\gamma = 5^z$ (for $w,x,y,z$ positive integers) so that:
\begin{itemize}
\item $2qN^{1-a} \geq \alpha/\rad(\alpha) \geq N^{1-a}$
\item $3N^{1-b} \geq \beta/\rad(\beta) \geq N^{1-b}$
\item $5N^{1-c} \geq \gamma/\rad(\gamma) \geq N^{1-c}$
\item For any non-zero integers $A,B,C$ with $A+B+C=0$ and $\alpha|A,\beta|B,\gamma|C$ we have that $\max(|A|,|B|,|C|)\geq \Omega(N^{1-\delta/2}/\log(N))$
\end{itemize}
\end{lem}
The basic idea of the proof will be to begin with $\alpha = 2^x$, $\beta = 3^y$, $\gamma=5^z$ for appropriate values of $x,y,z$.  This will work unless the associated lattice, $L_0$, has a particularly small shortest vector.  In this case, we replace some of the factors of 2 in $\alpha$ by factors of $q$.  The new lattice, $L$, will have a reasonably short vector given by an appropriate multiple of the old shortest vector.  Our result will follow from noting that this will be a relatively short vector that is not a multiple of another vector in the lattice.

\begin{proof}
We let $y$ and $z$ be the smallest integers so that $3^{y-1}\geq N^{1-b}$ and $5^{z-1}\geq N^{1-c}$, and let $\beta=3^y$ and $\gamma=5^z$.  Note that $\beta$ and $\gamma$ clearly satisfy the necessary conditions.  Let $x_0$ be the smallest integer so that $2^{x_0-1}\geq N^{1-a}$, and let $\alpha_0 = 2^{x_0}$.  Consider the lattice $L_0$ of triples of integers $(A,B,C)$ with $A+B+C=0$ and $\alpha_0|A,\beta|B$ and $\gamma|C$.  Let the smallest non-zero vector in this lattice be $(\alpha_0 t,\beta u,\gamma v)$. Note that this lattice has index $\alpha_0\beta\gamma$ within the lattice of all integer triples $A,B,C$ with $A+B+C=0$. Therefore, it has covolume $\Theta(\alpha_0\beta\gamma) = \Theta(N^{2-\delta})$ inside of the plane defined by $A+B+C=0$. Let $m$ be the length of the shortest vector in this lattice. If $m> N^{1-\delta/2}/\log(N)$, we may use $q=7$ and $\alpha=7\alpha_0$.  Otherwise, we may assume that $m \leq N^{1-\delta/2}/\log(N)$.

Let $q$ be the smallest prime not dividing $30t$.  Since $30t =O(N)$, we have that $q= O(\log N)$.  Let $2^h||u$. Let $k$ be the largest integer so that $2^k < 2^h N^{1-\delta/2}/m$.  Note that $2^h N^{1-\delta/2} > 2^k \beta |u| \geq 2^{k+h} N^{1-b}$.  Hence $2^{k} \leq N^{(1-a)/2 - (1-b)/2 + (1-c)/2}$.  $2^h \leq |u| \leq m/\beta \leq N^{(1-a)/2 - (1-b)/2 + (1-c)/2}$.  Thus, $2^{h+k} \leq N^{(1-a)-(1-b)+(1-c)} \leq N^{1-a} \leq 2^{x_0}.$

Note that therefore $2^h|\alpha_0 t , \beta u$. Since $\alpha_0 t + \beta u + \gamma v = 0$ and since $(\gamma,2)=1$, this implies that $2^h | v$.

Let $x = x_0 - h - k$.  Let $w$ be the smallest positive integer so that $q^{w-1}2^{x-1} \geq N^{1-a}$.  Let $\alpha = 2^x q^w$.  Clearly, $2qN^{1-a} \geq \alpha/\rad(\alpha) \geq N^{1-a}$.  Let $t'=2^k t$, $u' = 2^{-h}q^w u$, $v' = 2^{-h}q^w v$ (which are all integers by the above).  Notice that $\alpha t' + \beta u' + \gamma v' = 0.$  Furthermore, note that $m' := \max(|\alpha t'|,|\beta u'|,|\gamma v'|) = 2^{-h}q^w m.$  Now, $2^x = \Theta(N^{1-a} 2^{-h-k})$.  Therefore $q^2 2^{h+k} \gg q^w \gg q2^{h+k}$.  Thus, $q^2 N^{1-\delta/2} \gg m' \gg qN^{1-\delta/2}$.

We now consider the lattice, $L$ of triples $(A,B,C)$ with $A+B+C=0$ and $\alpha|A,\beta|B,\gamma|C$.  We wish to show that the shortest non-zero vector in this lattice has length at least $\Omega(N^{1-\delta/2}/\log(N))$.  We split into cases based upon whether or not $(\alpha t', \beta u',\gamma v')$ is a multiple of this shortest vector.

If $(\alpha t', \beta u', \gamma v')$ is a multiple of this shortest vector, we claim that it must be this shortest vector (up to sign).  This is because $\gcd(t',u',v')=1$.  This is true because $\gcd(t,u,v)=1$, $2\not | u'$, $q\not | t'$.  Hence the shortest vector in $L$ must have length $m'$ and we are done.

If $(\alpha t', \beta u', \gamma v')$ is not a multiple of the shortest vector, we use the fact that the product of the length of the shortest vector of a 2-dimensional lattice with the length of any non-multiple of the shortest vector is at least some constant multiple of the covolume.  Since the covolume of $L$ is $\Omega(\alpha\beta\gamma)=\Omega(qN^{2-\delta})$, the length of the shortest non-zero vector is at least $\Omega(qN^{2-\delta})/m' = \Omega(N^{1-\delta/2}/\log(N)).$
\end{proof}

We are now ready to prove Theorem \ref{lowerBoundThm}.

\begin{proof}[Proof of Theorem \ref{lowerBoundThm}]
Assume without loss of generality that $a\leq b\leq c$. Let $\delta = a+b+c-1 > 0$.  Let $q,\alpha,\beta,\gamma$ be as given in Lemma \ref{evenLatticeLem}.

Let $L$ be the lattice of triples of integers $A,B,C$ so that $A+B+C=0$ and $\alpha|A,\beta|B,\gamma|C$.  Let $L_n$ be the sublattice of $L$ consisting of the triples $(A,B,C)$ so that $n|A,B,C$.  Note that with an appropriate normalization of the area on the plane $A+B+C=0$ that $L$ has covolume $U=\alpha\beta\gamma$, and that $\log^2(N) N^{2-\delta} \gg U \gg N^{2-\delta}$.  Note that $L_n$ has covolume $\frac{n^2U}{\gcd(n,30q)}$.  Let $M$ be the length of the shortest non-zero vector in $L$, and recall that $M \gg N^{1-\delta/2}\log^{-1} (N)$.

Let $P$ be the polygon in the plane $A+B+C=0$ defined by $|A|,|B|,|C|\leq N$.  The number of solutions to the abc problem with parameters $(a,b,c,N)$ is at least the number of points in $L\cap P$ with relatively prime coordinates.  This is
$$
\sum_n \mu(n) |L_n \cap P| = \sum_{n=1}^{O(N \log(N) /M)} \mu(n) |L_n \cap P|.
$$
We can cut off the sum because for $n$ squarefree, a vector $v$ is in $L_n$ only if $\frac{\gcd(n,30q)v}{n}$ is in $L$.  This can happen only if $|v| \geq n M / (30 q)$.  Hence the summand is trivial for all $n$ bigger than a sufficiently large multiple of $qN/M$.

Letting $V$ be the volume of $P$, and noting that the shortest vector in $L_n$ has length $\Omega(nM/q)$, by Lemma \ref{latticeBoundsLem} the above equals
$$
\sum_{n=1}^{O(N\log (N)/M)} \left(\frac{\mu(n)\gcd(n,30q)}{n^2}\right)\left(\frac{V}{U} \right) + O(N\log(N)/(nM)+1).
$$
The main term is
\begin{align*}
\left(\frac{V}{U} \right)\left(\sum_{n=1}^{\infty} \frac{\mu(n)\gcd(n,30q)}{n^2} + O(M/N)\right) & = \Theta\left( \frac{V}{U}\right)\\ & = \Omega\left(\frac{N^2}{\log^2(N)N^{3-a-b-c}}\right) \\ & = \Omega(N^{a+b+c-1}\log^{-2}(N)).
\end{align*}
The error term is
\begin{align*}
O(\log^2(N) N/M + N\log(N)/M) & = O(\log^3(N) N^{\delta/2}) \\
& = O(\log^3(N) N^{(a+b+c-1)/2}).
\end{align*}
This completes our proof.
\end{proof}

\textbf{Note:} This Theorem can be obtained more simply and with better bounds ($\Omega(N^{a+b+c-1})$) in the case when $\min(a,b,c)+\max(a,b,c)>1$.  In this case, simply setting $\alpha = 2^x$,$\beta = 3^y$, $\gamma=5^z$, we note that $L$ contains the non-parallel vectors $(\alpha\beta,-\beta\alpha,0),(\alpha\gamma,0,-\alpha\gamma),(0,\beta\gamma,-\beta\gamma)$, at least two of which have length significantly less than $N$.  This implies an upper bound on the length of the longer vector of a reduced basis of $L$, and thus a lower bound on the length of the shortest non-zero vector.  This bound turns out to be sufficiently to prove our lower bound.

\subsection{Dyadic Intervals}\label{DaidicIntervalsSec}

Before beginning our work on upper bounds, we discuss some ideas involving dyadic intervals that we will make use of.  First a definition:
\begin{defn}
A dyadic interval is an interval of the form $[2^n,2^{n+1}]$ for some integer $n$.
\end{defn}
In the process of proving upper bounds we will often wish to count the number of solutions to an abc problem in which some functions of $A,B,C$ lie in fixed dyadic intervals.  We may for example claim that the number of solutions to an abc problem where $f(A),f(B),f(C)$ lie in fixed dyadic intervals is $O(X)$.  Here $f$ will be some specified function and we are claiming that for any triple of dyadic intervals $I_A,I_B,I_C$ the number of $A,B,C$ that are solutions to the appropriate abc problem and so that additionally $f(A)\in I_A, f(B)\in I_B$ and $f(C)\in I_C$ is $O(X)$.  When we do this, it will often be the case that $X$ depends on $f(A),f(B),f(C)$ and not just the parameters of the original abc problem we were trying to solve.  By this we mean that our upper bound is valid if the $f(A),f(B),f(C)$ appearing in it are replaced by any numbers in the appropriate dyadic intervals.  Generally this freedom will not matter since fixing dyadic intervals for $f(A),f(B),f(C)$ already fixes their values up to a multiplicative constant.  It should also be noted that $[1,N]$ can be covered by $O(\log N)$ dyadic intervals.  Thus if we prove bounds on the number of solutions in which a finite number of parameters (each at most $N$) lie in fixed dyadic intervals, we obtain an upper bound for the number of solutions with no such restrictions that is at most $N^\epsilon$ larger than the bound for the worst set of intervals.

\subsection{Lattice Upper Bounds}\label{LatticeUpperBoundsSec}

In order to prove upper bounds, we will need a slightly different form of Lemma \ref{latticeBoundsLem}.

\begin{lem}\label{latticeUpperBoundsLem}
Let $L$ be a 2 dimensional lattice and $P$ a convex polygon, centrally symmetric about the origin.  Then the number of vectors in $L\cap P$ which are not positive integer multiples of other vectors in $L$ is
$$
O\left(\frac{\textrm{Volume}(P)}{\textrm{CoVolume}(L)} + 1 \right).
$$
\end{lem}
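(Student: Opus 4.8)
The plan is to reduce this to Lemma~\ref{latticeBoundsLem} by a case analysis on how the "primitive" vectors of $L$ (those not positive integer multiples of other lattice vectors) are distributed. First I would fix a reduced basis $v_1, v_2$ of $L$ with $|v_1| = m$ the minimal norm and $|v_2| = M$ the second successive minimum, so that $\mathrm{CoVolume}(L) = \Theta(mM)$. The vectors of $L$ that are positive integer multiples of other vectors in $L$ are exactly the non-primitive ones together with... actually, more carefully: a vector $w$ is a positive integer multiple of another vector in $L$ iff $w = k u$ for some $u \in L$ and integer $k \geq 2$; such $w$ all lie on lines through the origin spanned by primitive vectors. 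So the vectors we must count are, on each rational line through the origin, just the shortest nonzero lattice vector on that line (in each of the two directions). Thus it suffices to bound the number of lines through the origin that meet $L \cap P$ in a nonzero point.

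The key step is then: any such line meets $L$ in a $1$-dimensional sublattice whose generator has norm at least $m$, so the line's contribution to $L \cap P$ (in one direction) is a point of norm in $[m, R]$ where $R = O(\mathrm{diam}(P))$. I would split into two regimes. If $M \gg \mathrm{diam}(P)$, then $P$ contains no lattice point outside the line $\langle v_1 \rangle$, so there is at most one line to count and the bound is $O(1)$; note in this case $\mathrm{Volume}(P)/\mathrm{CoVolume}(L)$ may be small but the "$+1$" covers us. If $M = O(\mathrm{diam}(P))$, then I would apply Lemma~\ref{latticeBoundsLem} directly: the number of primitive vectors in $L \cap P$ is at most $|L \cap P| = \mathrm{Volume}(P)/\mathrm{CoVolume}(L) + O(\mathrm{Perimeter}(P)/m + 1)$, and I need to absorb the $\mathrm{Perimeter}(P)/m$ term. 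Since $P$ is convex and centrally symmetric, $\mathrm{Perimeter}(P) = O(\mathrm{diam}(P))$, and because $P$ contains a lattice point of norm $M$ (if it contains any non-collinear point at all, again the $O(1)$ case handles the alternative), convexity and central symmetry force $\mathrm{Volume}(P) = \Omega(m \cdot M) = \Omega(\mathrm{CoVolume}(L))$ once $P$ also contains $\pm v_1$; wait — I need $P$ to contain $v_1$, which holds as soon as $m = O(\mathrm{diam}(P))$, which is automatic in the regime $M = O(\mathrm{diam}(P)) \geq m$. Then $\mathrm{Perimeter}(P)/m = O(\mathrm{diam}(P)/m)$ and $\mathrm{Volume}(P)/\mathrm{CoVolume}(L) = \Omega(\mathrm{diam}(P) \cdot M / (mM)) = \Omega(\mathrm{diam}(P)/m)$ — hold on, that requires $\mathrm{diam}(P) = \Omega(\mathrm{the\ full\ extent\ of\ }P)$, which is fine — so the error term is dominated by the main term.

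I expect the main obstacle to be the bookkeeping in the middle regime: making precise the claim that once the centrally symmetric convex polygon $P$ contains $\pm v_1$ and at least one lattice point of norm $\Theta(M)$, its volume is $\Omega(\mathrm{CoVolume}(L)) = \Omega(mM)$, and simultaneously that $\mathrm{Perimeter}(P) = O(\mathrm{Volume}(P)/m)$. The first follows from convexity (the convex hull of $\pm v_1$ and the norm-$M$ point has area $\Omega(mM)$ after noting $v_1$ and that point are linearly independent up to the collinear case). The second is the standard fact that a centrally symmetric convex body containing a segment of length $2m$ has perimeter $O(\mathrm{area}/m + m)$ — essentially an isoperimetric-type estimate using that $P$ is "wide" in the $v_1$ direction. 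An alternative, perhaps cleaner, route avoiding these estimates entirely: directly count lines. Apply the linear transformation from the proof of Lemma~\ref{latticeBoundsLem} to make $L = \mathbb{Z}^2$ and $m = 1$ (up to constants); then primitive vectors in $P$ are primitive integer vectors in a convex symmetric region, and the count of primitive integer vectors of norm $\leq R$ in such a region is $O(\mathrm{Volume}(P) + 1)$ by a direct Farey/lattice-point argument (one primitive vector per visible direction, and the number of directions hitting a convex symmetric region of inradius $\rho$ is $O(\rho^2 + 1)$ since consecutive Farey directions at "height" up to $\rho$ are spaced $\Omega(1/\rho)$ apart in angle near the boundary) — and $\mathrm{Volume}(P) = \Theta(\rho^2 \cdot (\text{aspect ratio}))$, so I would need $\mathrm{Volume}(P)$ to control the directional count, which again is where the aspect ratio of $P$ enters. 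Either way, the crux is controlling how eccentric $P$ can be relative to $L$, and I would lean on the reduced-basis/successive-minima structure together with central symmetry to close that gap.
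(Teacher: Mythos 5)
Your middle regime is where the argument breaks down, and the gap is not one of bookkeeping but of substance. The claim that forces the error term $\mathrm{Perimeter}(P)/m$ to be dominated — namely $\mathrm{Volume}(P)/\mathrm{CoVolume}(L) = \Omega(\mathrm{diam}(P)/m)$, i.e.\ $\mathrm{Volume}(P) = \Omega(\mathrm{diam}(P)\cdot M)$ — is simply false. Take $L = \Z^2$ (so $m = M = 1$, $\mathrm{CoVolume} = 1$) and let $P$ be the parallelogram with vertices $\pm(N,N-1)$, $\pm(1,1)$. This $P$ is convex, centrally symmetric, and contains two linearly independent lattice vectors, but $\mathrm{Volume}(P) = 2|\det| = 2$ while $\mathrm{diam}(P)\cdot M \sim N\sqrt{2}$. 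Applying Lemma~\ref{latticeBoundsLem} here gives the useless bound $2 + O(N)$ for a set that in fact contains $O(1)$ lattice points, so the strategy of ``apply Lemma~\ref{latticeBoundsLem} and absorb the perimeter term into the main term'' cannot work: for slivers oblique to the lattice basis, the perimeter term really is the dominant term in that lemma, and what saves you is not the main term but a different mechanism. Your alternative Farey route has a parallel problem, which you yourself flag — the inradius of $P$ does not control the number of primitive directions in $P$ (a $[-L,L]\times[-2,2]$ rectangle has inradius $2$ but $\Theta(L)$ primitive directions) — so neither branch closes.

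The paper bypasses all of this with a single packing observation that makes Lemma~\ref{latticeBoundsLem} unnecessary. In the nontrivial case $P$ contains two linearly independent lattice vectors, hence (by convexity and central symmetry) half of a fundamental parallelogram; thus $2P$ contains a full fundamental domain $F$, and for each $x \in L\cap P$ the translate $x + F$ sits inside $4P$. These translates are pairwise disjoint, so $|L\cap P|\cdot\mathrm{CoVolume}(L) \leq \mathrm{Volume}(4P)$, which is $O(\mathrm{Volume}(P))$ — no perimeter term ever appears, and the sliver example is handled automatically because $\mathrm{Volume}(4P) = 32$ there. The moral: once $P$ captures two independent lattice vectors you should bound $|L\cap P|$ outright by a volume-packing argument rather than by the main-term-plus-boundary-error count, because the boundary error is exactly what convex but eccentric $P$ can make large.
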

\begin{proof}
If $L\cap P$ only contains the origin or multiples of a single vector, the result follows trivially.  Otherwise $P$ contains two linearly independent vectors of $L$. This means that $P$ contains at least half of some fundamental domain.  Therefore $2P$ contains some whole fundamental domain. Therefore $4P$ contains all of the fundamental domains centered at any of the points in $L\cap P$.  Therefore $|L\cap P| = O\left(\frac{\textrm{Volume}(P)}{\textrm{CoVolume}(L)}  \right).$
\end{proof}

We will also make extensive use of the following proposition:

\begin{prop}\label{radProp}
For any $m$, the number of $k$ with $|k|\leq N$ and $\rad(k)=m$ is $O(N^\epsilon)$, where the implied constant depends on $\epsilon$ but not $N$ or $m$.
\end{prop}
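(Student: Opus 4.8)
The plan is to reduce the count to a bound on integers built from the primes dividing $m$, and then apply Rankin's trick. First, if $r(k)=m$ then $m$ is squarefree, every prime dividing $m$ divides $k$, and hence, the primes being distinct, $m\mid k$. Writing $k=\pm m\ell$ with $\ell\geq 1$, the condition $r(k)=m$ becomes $r(\ell)\mid m$, while $|k|\leq N$ forces $\ell\leq N/m\leq N$; in particular $m\leq N$, as otherwise there are no such $k$. So it suffices to prove that
$$
A(m,N):=\#\{\ell\geq 1:\ \ell\leq N,\ r(\ell)\mid m\}=O(N^{\epsilon})
$$
with an implied constant depending only on $\epsilon$.

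Second, apply Rankin's trick: for any $\delta\in(0,1]$, since $(N/\ell)^{\delta}\geq 1$ whenever $\ell\leq N$,
$$
A(m,N)\ \leq\ N^{\delta}\sum_{r(\ell)\mid m}\ell^{-\delta}\ =\ N^{\delta}\prod_{p\mid m}\frac{1}{1-p^{-\delta}},
$$
the equality holding because the $\ell$ with $r(\ell)\mid m$ are exactly the $\prod_{p\mid m}p^{e_p}$ ($e_p\geq 0$) and the Euler product is finite. Taking $\delta=\epsilon/2$ (we may assume $\epsilon\leq 1$), the factor $N^{\delta}$ contributes $N^{\epsilon/2}$, and it remains to bound the Euler product by $O_{\epsilon}(N^{\epsilon/2})$ \emph{uniformly} in $m$.

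Third --- the only point requiring care --- estimate $\sum_{p\mid m}-\log(1-p^{-\delta})$ by splitting the primes $p\mid m$ into those with $p\leq 4^{1/\delta}$ and those with $p>4^{1/\delta}$. There are at most $\pi(4^{1/\delta})=O_{\epsilon}(1)$ of the first kind, each contributing at most $-\log(1-2^{-\delta})=O_{\epsilon}(1)$. For $p>4^{1/\delta}$ we have $p^{-\delta}<\tfrac{1}{4}$, so $-\log(1-p^{-\delta})\leq 2p^{-\delta}$, and we control $\sum_{p\mid m}p^{-\delta}$ using $m\leq N$: a squarefree $m$ is a product of $\omega(m)$ distinct primes each $\geq 2$, so $\omega(m)\leq\log_2 N$, and the $j$-th smallest prime dividing $m$ is at least the $j$-th prime, hence $\geq j+1$; therefore
$$
\sum_{p\mid m}p^{-\delta}\ \leq\ \sum_{j=1}^{\omega(m)}(j+1)^{-\delta}\ \leq\ \int_{0}^{\omega(m)}(t+1)^{-\delta}\,dt\ \leq\ \frac{(\log_2 N+1)^{1-\delta}}{1-\delta}\ =\ O_{\epsilon}\!\left((\log N)^{1-\epsilon/2}\right),
$$
which is $o_{\epsilon}(\log N)$. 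Hence $\prod_{p\mid m}(1-p^{-\delta})^{-1}\leq\exp\!\big(O_{\epsilon}(1)+O_{\epsilon}((\log N)^{1-\epsilon/2})\big)$, which is $\leq N^{\epsilon/2}$ once $N$ exceeds a threshold depending only on $\epsilon$; for smaller $N$ we have $A(m,N)\leq N=O_{\epsilon}(1)=O_{\epsilon}(N^{\epsilon})$ trivially. Multiplying the two factors $N^{\epsilon/2}$ finishes the proof.

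I expect the main obstacle to be resisting the naive arguments, which are genuinely lossy here: counting exponent vectors $(e_p)_{p\mid m}$ directly, or writing $\ell$ as a squarefree number times a square, only gives bounds of shape $N^{O(1)}$ respectively $N^{1/2+\epsilon}$, since $d(m)$ can be as large as $N^{o(1)}$ and there are $\asymp N^{1/2}$ powerful numbers up to $N$. The point of the Rankin weighting $\ell^{-\delta}$ is precisely to turn the count into a prime-by-prime Euler product, and the trivial bound $\omega(m)\leq\log_2 N$ coming from $m\leq N$ is what keeps that product subpolynomial uniformly in $m$.
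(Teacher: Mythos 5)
Your proof is correct, and it takes a genuinely different route from the paper's. The paper works directly with the exponent vectors: writing $m = p_1\cdots p_n$, it observes that any $k$ with $r(k)=m$ corresponds to a lattice point $(a_1,\dots,a_n)$ with $a_i\geq 1$ and $\sum a_i\log p_i\leq\log N$, bounds the count by the volume $\frac{1}{n!}\prod_i\frac{\log N}{\log p_i}$ of the enclosing simplex, and then uses $n!\leq m\leq N$ (so $n=O(\log N/\log\log N)$) together with a Stirling-type estimate to conclude. You instead reduce to counting $\ell\leq N$ with $r(\ell)\mid m$ and apply Rankin's trick, converting the count into the Euler product $N^\delta\prod_{p\mid m}(1-p^{-\delta})^{-1}$, which you then control uniformly in $m$ by splitting at $4^{1/\delta}$ and using the trivial bound $\omega(m)\leq\log_2 N$. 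Both arguments hinge on the same elementary fact --- that $m\leq N$ forces $m$ to have few prime factors --- but deploy it differently: the paper's simplex-volume computation is self-contained and in fact yields the sharper explicit bound $\exp\bigl(O(\log N\,\log\log\log N/\log\log N)\bigr)$, whereas your Rankin weighting is the more standard and systematic analytic device, sidesteps the factorial estimate entirely, and would generalize more readily (e.g.\ to weighted counts or to counting smooth numbers with additional constraints). Your remark about the lossiness of naive alternatives is accurate and shows good awareness of why the weighting is needed.
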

\begin{proof}
Let $m=p_1p_2\cdots p_n$, where $p_1<p_2<\cdots<p_n$ are primes (if $m$ is not squarefree we have no solutions).  Then all such $k$ must be of the form $\prod_{i=1}^n p_i^{a_i}$ for some integers $a_i\geq 1$ with $\sum_i a_i\log(p_i) \leq \log(N)$.  Note that if for each such $k$ you consider the unit cube defined by $\prod_{i=1}^n [a_{i}-1,a_i]\subset \R^n$, these cubes have disjoint interiors and are contained in a simplex of volume $\frac{1}{n!}\prod_i \frac{\log(N)}{\log(p_i)}$.  Hence the number of such $k$ is at most
$$
\frac{1}{n!}\prod_i \frac{\log(N)}{\log(p_i)} = O\left(\frac{\log(N)}{n}\right)^n.
$$
Now we must also have that $n!\leq \prod_i p_i\leq N$ or there will be no solutions, so $n=O\left(\frac{\log(N)}{\log\log(N)}\right)$.  Now $n\log N - n\log n$ is increasing for $n<N/e$ so the number of solutions is at most
\begin{align*}
O\left(\frac{\log(N)}{\log(N)/\log\log(N)}\right)^{O\left(\frac{\log(N)}{\log\log(N)}\right)} & = \exp\left(O\left( \frac{\log(N)\log\log\log(N)}{\log\log(N)}\right)\right) \\ & = O(N^\epsilon).
\end{align*}
\end{proof}

We now need some more definitions.  For an integer $n$ define
$$
u(n) := \prod_{p||n} p
$$
to be the product of primes that divide $n$ exactly once.  Let
$$
e(n) := \prod_{p^\alpha||n, \alpha>1} p^\alpha = n/u(n)
$$
be the product of primes dividing $n$ more than once counted with their appropriate multiplicity.  Finally, let
$$
v(n) = \prod_{p^2|n} p = \rad(n)/u(n) = \rad(e(n))
$$
be the product of primes dividing $n$ more than once.

We can now prove the first part of our upper bound:

\begin{prop}\label{latticeUpperBoundProp}
Fix $0<a,b,c\leq 1$.  The number of solutions to the abc problem with parameters $(a,b,c,N)$ and with $v(A),v(B),v(C)$ lying in fixed dyadic intervals is
$$
O(N^{a+b+c-1+\epsilon} + v(A)v(B)v(C)N^\epsilon).
$$
\end{prop}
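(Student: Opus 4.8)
The plan is to fix the powerful parts $\alpha := e(A)$, $\beta := e(B)$, $\gamma := e(C)$, to bound the number of solutions having each prescribed triple $(\alpha,\beta,\gamma)$, and then to sum over triples. Since $v(n) = r(e(n))$, prescribing a dyadic interval for $v(A)$ amounts to requiring $r(\alpha)$ to lie in that interval; and since relatively prime $A,B,C$ with $A+B+C=0$ are pairwise coprime, so are $\alpha,\beta,\gamma$. Two elementary observations drive the estimate. First, $\alpha$ is powerful, so $\alpha \ge r(\alpha)^2 = v(A)^2$ (and cyclically). Second, $u(A) := |A|/\alpha$ is squarefree and prime to $\alpha$, so $r(A) = u(A)\,v(A) = v(A)\,|A|/\alpha$, and the hypothesis $r(A) \le |A|^a$ becomes $|A| \le (\alpha/v(A))^{1/(1-a)}$ (vacuous beyond $|A|\le N$ when $a=1$, since then it reads $r(\alpha)\le\alpha$). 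Putting $N_A := \min\bigl(N,(\alpha/v(A))^{1/(1-a)}\bigr)$, and cyclically $N_B,N_C$, we get in all cases $\alpha \ge v(A)\,N_A^{1-a}$, $\beta \ge v(B)\,N_B^{1-b}$, $\gamma \ge v(C)\,N_C^{1-c}$.

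Fix such a triple $(\alpha,\beta,\gamma)$. Every solution with these powerful parts is a point of the rank-two lattice
$$
L := \{\, (A,B,C)\in\Z^3 : A+B+C = 0,\ \alpha\mid A,\ \beta\mid B,\ \gamma\mid C \,\},
$$
whose covolume, in the normalisation of the plane $A+B+C=0$ used in the proof of Theorem \ref{lowerBoundThm}, is $\alpha\beta\gamma$ — here the pairwise coprimality of $\alpha,\beta,\gamma$ is essential. Moreover that point lies in the centrally symmetric convex polygon $P$ cut out by $|A|\le N_A$, $|B|\le N_B$, $|C|\le N_C$, whose area in this normalisation is $\Theta\bigl(N_AN_BN_C/\max(N_A,N_B,N_C)\bigr)$ (the product of the two smaller dimensions). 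A genuine solution has $\gcd(A,B,C)=1$, hence its lattice point is not a positive integer multiple of another point of $L$, so Lemma \ref{latticeUpperBoundsLem} bounds the number of solutions with the given powerful parts by
$$
O\!\left(\frac{N_AN_BN_C}{\max(N_A,N_B,N_C)\,\alpha\beta\gamma} + 1\right).
$$

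Now sum over admissible (necessarily pairwise coprime) triples. For the main term, $\alpha\beta\gamma \ge v(A)v(B)v(C)\,N_A^{1-a}N_B^{1-b}N_C^{1-c}$ gives
$$
\frac{N_AN_BN_C}{\max(N_A,N_B,N_C)\,\alpha\beta\gamma} \;\le\; \frac{N_A^{a}N_B^{b}N_C^{c}}{\max(N_A,N_B,N_C)\,v(A)v(B)v(C)} \;\le\; \frac{\max(1,\,N^{a+b+c-1})}{v(A)v(B)v(C)},
$$
the last step by bounding the two smaller of $N_A,N_B,N_C$ by the largest (which is $\le N$) and using $a,b,c\le1$. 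And there are few admissible triples: an admissible $\alpha$ is a powerful number $\le N$ whose radical lies in a fixed dyadic interval containing $v(A)$, that interval contains $O(v(A))$ integers, and by Proposition \ref{radProp} each integer value $m$ of the radical is attained by only $O(N^\epsilon)$ integers $k\le N$ with $r(k)=m$; so there are $O(v(A)N^\epsilon)$ choices for $\alpha$, and similarly for $\beta,\gamma$, for a total of $O(v(A)v(B)v(C)N^\epsilon)$ triples. Multiplying this by the per-triple bound $O\bigl(\max(1,N^{a+b+c-1})/(v(A)v(B)v(C)) + 1\bigr)$ gives $O\bigl(N^{a+b+c-1+\epsilon} + N^\epsilon + v(A)v(B)v(C)N^\epsilon\bigr)$, and since $v(A)v(B)v(C)\ge1$ this is the asserted $O\bigl(N^{a+b+c-1+\epsilon} + v(A)v(B)v(C)N^\epsilon\bigr)$.

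The genuinely arithmetic inputs are just Proposition \ref{radProp} and the lattice counting lemma; the remaining steps are bookkeeping, and the part that will need the most care is the faithful translation of the radical hypotheses into the box dimensions $N_A,N_B,N_C$ and the handling of the degenerate cases $a=1$ or $v(A),v(B),v(C)$ equal to $1$, where these dimensions collapse. One should also carefully verify that the covolume of $L$ is exactly $\alpha\beta\gamma$, which rests on the pairwise coprimality of the powerful parts.
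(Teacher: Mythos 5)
Your proposal is correct and follows essentially the same route as the paper: fix the powerful parts $e(A),e(B),e(C)$ (pairwise coprime), bound the solutions for each such triple by applying Lemma \ref{latticeUpperBoundsLem} to the lattice of triples divisible by them, and count the number of admissible triples via Proposition \ref{radProp}. The only deviation is cosmetic bookkeeping---you fold the constraint $r(A)\le|A|^a$ into explicit box dimensions $N_A,N_B,N_C$, whereas the paper additionally fixes dyadic intervals for $|A|,|B|,|C|,u,e$ and applies that constraint at the end---and this yields the same bound.
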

Note that since $v(A),v(B),v(C)\leq \sqrt{N}$, this would already give a weaker but non-trivial version of Theorem \ref{upperBoundThm}.
\begin{proof}
We will begin by additionally fixing dyadic intervals for $|A|, |B|, |C|$, $u(A),$ $u(B), u(C)$, $e(A), e(B), e(C)$.  Since there are only $\log(N)$ possible intervals for each, our total number of solutions will be greater by a factor of at most a factor of $O(N^\epsilon)$.  Assume without loss of generality that $|A|\leq |B| \leq |C|$.

There are $O(v(A)v(B)v(C))$ ways to fix the values of $v(A),v(B),v(C)$ within their respective dyadic intervals.  Given these, by Proposition \ref{radProp} there are $O(N^\epsilon)$ possible values of $e(A),e(B),e(C)$.  Pick a triple of values for $e(A),e(B),e(C)$.  We assume these are relatively prime, for otherwise they could not correspond to any valid solutions to our abc problem.  Define the lattice $L$ to consist of triples of integers which sum to 0, and are divisible by $e(A),e(B),e(C)$ respectively.  We define the polygon $P$ to be the set of $(x_1,x_2,x_3)$ so that $x_1+x_2+x_3=0$ and $|x_1|$ is bounded by the upper end of the dyadic interval for $|A|$, and $|x_2|,|x_3|$ are likewise bounded by the intervals for $|B|$ and $|C|$.  The number of solutions to our abc problem with the specified values of $e(A),e(B),e(C)$ and with $|A|,|B|,|C|$ in the appropriate dyadic intervals is at most the number of vectors in $L\cap P$ that are not positive integer multiples of other vectors in $L$.  By Lemma \ref{latticeUpperBoundsLem} this is
$$
O\left(\frac{\textrm{Volume}(P)}{\textrm{CoVolume}(L)} + 1 \right) = O\left( \frac{|AB|}{e(A)e(B)e(C)} + 1\right).
$$
Multiplying this by the number of ways we had to choose values for $v(A),$ $v(B),$ $v(C),$ $e(A),e(B),e(C)$, we get that the total number of solutions to our original abc problem with the specified dyadic intervals for $|A|,v(A),e(A),$ $\etc$ is at most
\begin{align*}
O & \left(\frac{N^\epsilon |ABC|v(A)v(B)v(C)}{e(A)e(B)e(C) |C|} + N^\epsilon v(A)v(B)v(C)\right)\\
& =  O\left( \frac{N^\epsilon \rad(A)\rad(B)\rad(C)}{|C|} + N^\epsilon v(A)v(B)v(C)\right)\\
& \leq  O(N^\epsilon |A|^a|B|^b|C|^{c-1} + N^\epsilon v(A)v(B)v(C) ).
\end{align*}
Where the last step comes from noting that for any solution to our abc problem, $\rad(A)\leq |A|^a$, etc. Since $a+b+c>1$, $|A^aB^bC^{c-1}|$ is maximized with respect to $N\geq |C|\geq |B| \geq |A|$ when $|A|=|B|=|C|=N$.  So we obtain the bound
$$
O(N^{a+b+c-1+\epsilon} + N^\epsilon v(A)v(B)v(C)).
$$
\end{proof}

\section{Points on Conics}\label{conicsSec}

The bound from Proposition \ref{latticeUpperBoundProp} is useful so long as $v(A),v(B),v(C)$ are not too big.  When they are large, we shall use different techniques.  In particular, if $v(A),v(B),v(C)$ are large, then $A,B,C$ are divisible by large squares.  We will fix non-square parts of these numbers and bound the number of solutions using a Theorem from \cite{HB}:

\begin{thm}[\cite{HB} Theorem 2]
Let $q$ be an integral ternary quadratic form with matrix $M$. Let
$\Delta = | \det M|$, and assume that $\Delta \neq 0$. Write $\Delta_0$ for the highest common factor
of the $2 \times 2$ minors of $M$. Then the number of primitive integer solutions of
$q(x) = 0$ in the box $|x_i| \leq Ri$ is
$$
\ll \{1+ (\frac{R_1R_2R_3\Delta_0^2}{\Delta})^{1/2}\}d_3(\Delta).
$$
Where $d_3(\Delta)$ is the number of ways of writing $\Delta$ as a product of three integers.
\end{thm}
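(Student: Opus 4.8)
Since the statement is Theorem~2 of \cite{HB}, reproduced here verbatim so that it can be used as a black box, no proof is needed in the present paper; I only indicate the shape of the argument behind such a bound, referring to \cite{HB} for the details. The plan has two parts: first parametrize the primitive integer zeros of $q$, then count them.

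For the first part, note that a nonsingular ternary quadratic form either has no nonzero integer zero---in which case the count is trivially $\ll 1$---or its zero conic is $\Q$-isomorphic to $\mathbb{P}^1$. In the latter case, fixing a rational point on the conic and projecting from it shows, after clearing denominators and dividing out common factors, that every primitive integer zero $\mathbf{x}$ of $q$ is, up to sign, of the form $\mathbf{x} = g^{-1}\bigl(f_1(s,t),f_2(s,t),f_3(s,t)\bigr)$ for coprime integers $s,t$, with $f_1,f_2,f_3 \in \Z[s,t]$ fixed binary quadratic forms and $g$ drawn from a bounded set of positive integers. The real content is to make this uniform in $q$: one wants only $\ll d_3(\Delta)$ such data $(f_1,f_2,f_3;g)$ to exhaust all primitive zeros, and the determinant of the $3\times 3$ coefficient matrix of $(f_1,f_2,f_3)$ to have order $\Delta/\Delta_0^2$. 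Both facts come out of a prime-by-prime analysis, in which the $p$-adic structure of $q$---recorded by $\Delta$ and by the gcd $\Delta_0$ of the $2\times 2$ minors of $M$---restricts the possible local shapes of a primitive solution, and one bookkeeps the resulting cases.

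For the second part, one fixes a parametrization and, after a reduction that balances the sizes of the forms $f_i$ against the $R_i$, counts coprime $(s,t)$ with $|f_i(s,t)| \le gR_i$ for $i=1,2,3$. Because $f_1,f_2,f_3$ are linearly independent binary quadratic forms, these inequalities together with the determinant bound above confine $(s,t)$ to a region on which a planar lattice-point estimate, in the spirit of Lemma~\ref{latticeBoundsLem}, yields $\ll 1 + (R_1R_2R_3\Delta_0^2/\Delta)^{1/2}$ admissible $(s,t)$. Summing over the $\ll d_3(\Delta)$ parametrizations produces the stated bound.

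The main obstacle is the first part: producing the parametrization with those two uniformities---at most $\ll d_3(\Delta)$ cases, and coefficient determinant of order $\Delta/\Delta_0^2$---with constants independent of $q$. This is exactly where the arithmetic of the form, as opposed to merely the real geometry of the conic, does the work, and it is what makes the statement nontrivial; once the parametrization is in hand, the counting in the second part is comparatively routine.
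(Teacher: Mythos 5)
You correctly observe that this statement is quoted verbatim from \cite{HB} and used in the paper purely as a black box; the paper gives no proof, and none is expected. Your decision to treat it as such is exactly right, and the sketch you provide of Heath-Brown's argument (parametrize the conic from a rational point, control the number of parametrizations and their coefficient determinants via a local analysis that produces the $d_3(\Delta)$ and $\Delta/\Delta_0^2$ factors, then do a planar lattice-point count in $(s,t)$) is a fair outline of what the cited proof does, even if none of those details are needed for the present paper.
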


Putting this into a form that fits our needs slightly better:
\begin{cor}\label{conicCor}
For $a,b,c$ relatively prime integers, the number of solutions to $aX^2+bY^2+cZ^2=0$ in relatively prime integers $X,Y,Z$ with $|X|\leq R_1$, $|Y|\leq R_2$, $|Z|\leq R_3$ is
$$
O\left(\left(1+\sqrt{\frac{R_1R_2R_3}{|abc|}} \right)(|abc|)^\epsilon \right).
$$
\end{cor}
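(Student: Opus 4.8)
The plan is to obtain the Corollary as an immediate specialization of the quoted Theorem~2 of \cite{HB}. I would apply that Theorem to the diagonal ternary quadratic form $q(x_1,x_2,x_3) = a x_1^2 + b x_2^2 + c x_3^2$, whose matrix is $M = \operatorname{diag}(a,b,c)$. We may assume $abc \neq 0$, as otherwise the quantity $\sqrt{R_1R_2R_3/|abc|}$ is not even defined and the statement is understood to be vacuous; under this assumption $\Delta := |\det M| = |abc| \neq 0$, so the hypothesis of Theorem~2 is met, and a relatively prime solution of $aX^2+bY^2+cZ^2 = 0$ in the box $|X| \le R_1,\ |Y|\le R_2,\ |Z| \le R_3$ is exactly a primitive integer zero of $q$ in that box.

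The one computation that needs care is the value of $\Delta_0$, the highest common factor of the $2\times 2$ minors of $M$. Because $M$ is diagonal, every such minor is either $0$ or one of $\pm ab,\ \pm bc,\ \pm ca$, so $\Delta_0 = \gcd(ab, bc, ca)$. Here I would read ``relatively prime'' as \emph{pairwise} coprime --- which is precisely the situation in the intended application, where $a,b,c$ will be the non-square parts of the pairwise coprime integers $A,B,C$ --- and then a prime dividing all of $ab, bc, ca$ would have to divide at least two of $a,b,c$, which is impossible. Hence $\Delta_0 = 1$.

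With $\Delta = |abc|$ and $\Delta_0 = 1$, Theorem~2 gives that the number of primitive integer zeros of $q$ in the box is
$$
\ll \left(1 + \sqrt{\frac{R_1 R_2 R_3}{|abc|}}\right) d_3(|abc|).
$$
To finish, I would absorb the divisor factor using the standard estimate $d_k(n) = O_{k,\epsilon}(n^\epsilon)$ (for fixed $k$ this follows from multiplicativity together with $d_k(p^\alpha) \le (\alpha+1)^{k-1}$), applied with $k = 3$ and $n = |abc|$, yielding $d_3(|abc|) = O(|abc|^\epsilon)$ and hence the stated bound, with implied constant depending only on $\epsilon$. I do not anticipate any serious obstacle: the only real point of substance is pinning down $\Delta_0 = 1$ together with the corresponding reading of ``relatively prime'', and the rest is a direct substitution into \cite{HB} plus a textbook divisor bound.
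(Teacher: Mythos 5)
Your proposal is correct and follows exactly the paper's (one-line) proof: apply Heath-Brown's Theorem~2 to the diagonal form, read off $\Delta = |abc|$ and $\Delta_0 = 1$, and use $d_3(n) = O(n^\epsilon)$. You additionally supply the justification for $\Delta_0 = 1$ that the paper leaves implicit, correctly observing that this requires reading ``relatively prime'' as pairwise coprime --- which is indeed the situation in the paper's application, where the coefficients are $\pm T(A), \pm T(B), \pm T(C)$ for pairwise coprime $A,B,C$.
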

\begin{proof}
This follows immediately by applying the above Theorem to the obvious quadratic form, noting that $\Delta=|abc|$, $\Delta_0=1$ and that $d_3(N)=O(N^\epsilon)$.
\end{proof}

We now have all of the machinery ready to prove the upper bound.  We make one final pair of definitions.

Let
$$
S(n) := \prod_{p^\alpha || n} p^{\lfloor\alpha/2\rfloor} = \sup \{ m: m^2|n\}.
$$
be the largest number whose square divides $n$.
Let
$$
T(n) = \left|\frac{n}{ S(n)^2}\right|.
$$
In particular, $|n|=T(n)S(n)^2$ for all $n$ with $T(n)$ squarefree.

We use Corollary \ref{conicCor} to prove another upper bound.
\begin{prop}\label{conicUpperBoundProp}
Fix dyadic intervals for $S(A),T(A),S(B),T(B),S(C),$ and $T(C)$.  The number of solutions of the abc problem with $S$ and $T$ of $A,B,C$ lying in these intervals is
$$
O\left(\left(T(A)T(B)T(C)+\sqrt{{S(A)S(B)S(C)}{T(A)T(B)T(C)}} \right)N^\epsilon \right).
$$
\end{prop}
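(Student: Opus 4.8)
The plan is to write each of $A$, $B$, $C$ as a squarefree number times a perfect square and thereby reduce the count to Corollary \ref{conicCor}.  For a solution to the ABC problem write $A=\epsilon_A T(A)S(A)^2$, $B=\epsilon_B T(B)S(B)^2$, $C=\epsilon_C T(C)S(C)^2$, where $\epsilon_A,\epsilon_B,\epsilon_C\in\{\pm1\}$; here $T(A),T(B),T(C)$ are automatically squarefree (otherwise $S$ could be enlarged), and fixing the sign pattern $(\epsilon_A,\epsilon_B,\epsilon_C)$ costs only a constant factor, so we take it fixed.  Since $A,B,C$ are pairwise coprime --- a prime dividing two of them would divide the third because $A+B+C=0$ --- the numbers $T(A),T(B),T(C)$ are pairwise coprime and the vector $(S(A),S(B),S(C))$ is primitive.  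Note also that fixing dyadic intervals for $S$ and $T$ of each of $A$, $B$, $C$ already pins down $|A|=T(A)S(A)^2$ (and likewise $|B|$, $|C|$) to within a constant factor, so no further dyadic conditions are needed.

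I would then reduce to conics.  For each admissible triple of actual values $t_1=T(A)$, $t_2=T(B)$, $t_3=T(C)$ --- pairwise coprime squarefree integers lying in the prescribed dyadic intervals --- the solutions with these values inject into the set of primitive integer points $(x,y,z)$ on
$$
\epsilon_A t_1 x^2+\epsilon_B t_2 y^2+\epsilon_C t_3 z^2=0
$$
with $|x|,|y|,|z|$ bounded by the upper endpoints of the dyadic intervals for $S(A),S(B),S(C)$; the injection is well defined because, with the signs and the $t_i$ fixed, $A$ is determined by $S(A)$, and similarly for $B$ and $C$.  Since the $t_i$ are pairwise coprime the associated ternary form has $\Delta_0=1$, so Corollary \ref{conicCor} applies; using that each of $S(A),S(B),S(C),T(A),T(B),T(C)$ is fixed up to a constant by its dyadic interval and that $t_1t_2t_3\le N^3$ absorbs the $(|abc|)^\epsilon$ factor, the number of such points is
$$
O\!\left(\left(1+\sqrt{\frac{S(A)S(B)S(C)}{T(A)T(B)T(C)}}\;\right)N^\epsilon\right).
$$

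Finally I would sum over the admissible triples $(t_1,t_2,t_3)$.  Each $t_i$ lies in a dyadic interval whose length is of the same order as $T(A)$, $T(B)$, or $T(C)$ respectively, so there are $O(T(A)T(B)T(C))$ such triples.  Multiplying the per-triple bound by this count and simplifying
$$
T(A)T(B)T(C)\cdot\sqrt{\frac{S(A)S(B)S(C)}{T(A)T(B)T(C)}}=\sqrt{S(A)S(B)S(C)T(A)T(B)T(C)}
$$
yields the stated bound.  The argument is mostly bookkeeping; the step needing the most care is arranging the reduction to a conic with \emph{pairwise coprime} coefficients and a \emph{primitive} solution vector, since this is exactly what makes Corollary \ref{conicCor} applicable with $\Delta_0=1$ and $d_3(\Delta)=O(N^\epsilon)$.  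One should also keep in mind that overcounting is harmless here --- for instance including triples with $t_1S(A)^2>N$, or conic points not coming from genuine ABC solutions --- since only an upper bound is sought; in particular the radical constraints $r(A)\le|A|^a$, etc., are not used at all in this proposition.
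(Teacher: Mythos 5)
Your proposal is correct and follows essentially the same approach as the paper: fix the $O(T(A)T(B)T(C))$ choices of $T$ values, apply Corollary~\ref{conicCor} to the resulting conic $\pm T(A)x^2\pm T(B)y^2\pm T(C)z^2=0$, and multiply. You spell out a couple of details that the paper leaves implicit --- the pairwise coprimality of the $T$'s and the primitivity of $(S(A),S(B),S(C))$, which are exactly what make Corollary~\ref{conicCor} applicable --- but the structure of the argument is identical.
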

\begin{proof}
There are $O(T(A)T(B)T(C))$ choices for the values of $T(A),T(B),$ $T(C)$ lying in their appropriate intervals.  Fixing these values, we count the number of solutions to
$$
\pm T(A)S(A)^2 \pm T(B)S(B)^2 \pm T(C)S(C)^2 = 0
$$
with $S(A),S(B),S(C)$ relatively prime and in the appropriate intervals.  By Corollary \ref{conicCor} this is at most
$$
O\left(\left(1+\sqrt{\frac{S(A)S(B)S(C)}{T(A)T(B)T(C)}} \right)N^\epsilon \right).
$$
Hence the total number of solutions to our abc problem with $T(A),S(A),$ $\etc$ lying in appropriate intervals is
$$
O\left(\left(T(A)T(B)T(C)+\sqrt{{S(A)S(B)S(C)}{T(A)T(B)T(C)}} \right)N^\epsilon \right).
$$
\end{proof}

We are now prepared to prove our upper bound on the number of solutions to an abc problem.

\begin{proof}[Proof of Theorem \ref{upperBoundThm}]
It is enough to prove our Theorem after fixing $S(A),$ $T(A),$ $v(A)$, $S(B),T(B),v(B)$, $S(C),T(C),v(C)$ to all lie in fixed dyadic intervals, since there are only $O(\log(N)^9)=O(N^\epsilon)$ choices of these intervals.  It should be noted that $S(n)\geq v(n)$ for all $n$.  By Proposition \ref{latticeUpperBoundProp} we have the number of solutions is at most
$$
O\left(N^{a+b+c-1+\epsilon} + S(A)S(B)S(C) N^\epsilon\right).
$$
By Proposition \ref{conicUpperBoundProp}, noting that $N\geq T(A)S(A)^2,T(B)S(B)^2,T(C)S(C)^2$, we know that the number of solutions is at most
$$
O\left( N^{3+\epsilon} (S(A)S(B)S(C))^{-2} + N^{3/2+\epsilon}(S(A)S(B)S(C))^{-1/2}\right).
$$
If $S(A)S(B)S(C)\geq N$ this latter bound is $O(N^{1+\epsilon})$ and if $S(A)S(B)S(C)\leq N$, the former bound is $O(N^{a+b+c-1+\epsilon} + N^{1+\epsilon})$.  So in either case we have our desired bound.
\end{proof}

\section{Conclusion}

We have proved several bounds on the number of solutions to abc type problems.  In particular we have proven the Conjecture \ref{mainConj} so long as $a+b+c\geq 2$.

Our upper bounds likely cannot be extended much, because when $S(A)S(B)S(C)\sim N$, then Corollary \ref{conicCor} only says that we have $O(N^\epsilon)$ solutions for each choice of the $T$'s.  To do better than this, we would need to show that for some reasonable fraction of $T$'s that there were no solutions.

\section*{Acknowledgements}

This work was done with the support of NSF and NDSEG graduate fellowships.  I would also like to thank Noam Elkies for suggesting the use of Heath-Brown's bound for points on a conic and Andrew Granville for the idea of adjusting the value of $\alpha$ in the lower bound to avoid short vectors in $L$.

\end{document}